\tikzset{
	symbol/.style={
		draw=none,
		every to/.append style={
			edge node={node [sloped, allow upside down, auto=false]{$#1$}}}
	}
}
\begin{document}
	\pdfrender{StrokeColor=black,TextRenderingMode=2,LineWidth=0.2pt}	
	
	\title{Minimal pairs, inertia degrees, ramification degrees and implicit constant fields}

	\author{Arpan Dutta}

	\def\NZQ{\mathbb}               
	\def\NN{{\NZQ N}}
	\def\QQ{{\NZQ Q}}
	\def\ZZ{{\NZQ Z}}
	\def\RR{{\NZQ R}}
	\def\CC{{\NZQ C}}
	\def\AA{{\NZQ A}}
	\def\BB{{\NZQ B}}
	\def\PP{{\NZQ P}}
	\def\FF{{\NZQ F}}
	\def\GG{{\NZQ G}}
	\def\HH{{\NZQ H}}
	\def\UU{{\NZQ U}}
	\def\P{\mathcal P}
	
	%
	%
	\let\union=\cup
	\let\sect=\cap
	\let\dirsum=\oplus
	\let\tensor=\otimes
	\let\iso=\cong
	\let\Union=\bigcup
	\let\Sect=\bigcap
	\let\Dirsum=\bigoplus
	\let\Tensor=\bigotimes
	
	\theoremstyle{plain}
	\newtheorem{Theorem}{Theorem}[section]
	\newtheorem{Lemma}[Theorem]{Lemma}
	\newtheorem{Corollary}[Theorem]{Corollary}
	\newtheorem{Proposition}[Theorem]{Proposition}
	\newtheorem{Problem}[Theorem]{}
	\newtheorem{Conjecture}[Theorem]{Conjecture}
	\newtheorem{Question}[Theorem]{Question}
	
	\theoremstyle{definition}
	\newtheorem{Example}[Theorem]{Example}
	\newtheorem{Examples}[Theorem]{Examples}
	\newtheorem{Definition}[Theorem]{Definition}
	
	\theoremstyle{remark}
	\newtheorem{Remark}[Theorem]{Remark}
	\newtheorem{Remarks}[Theorem]{Remarks}
	
	\newcommand{\trdeg}{\mbox{\rm trdeg}\,}
	\newcommand{\rr}{\mbox{\rm rat rk}\,}
	\newcommand{\sep}{\mathrm{sep}}
	\newcommand{\ac}{\mathrm{ac}}
	\newcommand{\ins}{\mathrm{ins}}
	\newcommand{\res}{\mathrm{res}}
	\newcommand{\Gal}{\mathrm{Gal}\,}
	\newcommand{\ch}{\mathrm{char}\,}
	\newcommand{\Aut}{\mathrm{Aut}\,}
	\newcommand{\kras}{\mathrm{kras}\,}
	\newcommand{\dist}{\mathrm{dist}\,}
	\newcommand{\ord}{\mathrm{ord}\,}
	
	\newcommand{\n}{\par\noindent}
	\newcommand{\nn}{\par\vskip2pt\noindent}
	\newcommand{\sn}{\par\smallskip\noindent}
	\newcommand{\mn}{\par\medskip\noindent}
	\newcommand{\bn}{\par\bigskip\noindent}
	\newcommand{\pars}{\par\smallskip}
	\newcommand{\parm}{\par\medskip}
	\newcommand{\parb}{\par\bigskip}
	\let\epsilon\varepsilon
	\let\phi=\varphi
	\let\kappa=\varkappa
	
	\def \a {\alpha}
	\def \b {\beta}
	\def \s {\sigma}
	\def \d {\delta}
	\def \g {\gamma}
	\def \o {\omega}
	\def \l {\lambda}
	\def \th {\theta}
	\def \D {\Delta}
	\def \G {\Gamma}
	\def \O {\Omega}
	\def \L {\Lambda}
	%
	%
	\textwidth=15cm \textheight=22cm \topmargin=0.5cm
	\oddsidemargin=0.5cm \evensidemargin=0.5cm \pagestyle{plain}

	\address{Department of Mathematics, IISER Mohali,
		Knowledge City, Sector 81, Manauli PO,
		SAS Nagar, Punjab, India, 140306.}
	\email{arpan.cmi@gmail.com}
	
	\date{\today}
	
	\thanks{This work was supported by the Post-Doctoral Fellowship of the National Board of Higher Mathematics, India.}
	
	\keywords{Valuation, minimal pairs, valuation transcendental extensions, ramification theory, implicit constant fields, extensions of valuation to rational function fields}
	
	\subjclass[2010]{12J20, 13A18, 12J25}	
	
	\maketitle


\begin{abstract}
	An extension $(K(X)|K,v)$ of valued fields is said to be valuation transcendental if we have equality in the Abhyankar inequality. Minimal pairs of definition are fundamental objects in the investigation of valuation transcendental extensions. In this article, we associate a uniquely determined positive integer with a valuation transcendental extension. This integer is defined via a chosen minimal pair of definition, but it is later shown to be independent of the choice. Further, we show that this integer encodes important information regarding the implicit constant field of the extension $(K(X)|K,v)$. 
\end{abstract}


\section{Introduction} Throughout this article we will assume that $(\overline{K}(X)|K,v)$ is an extension of valued fields, where $\overline{K}$ is a fixed algebraic closure of $K$ and $X$ is an indeterminate. The extension $(K(X)|K,v)$ satisfies the famous Abhyankar inequality:
\begin{equation}\label{eqn Abh inequality}
	\rr vK(X)/vK + \trdeg[K(X)v:Kv] \leq 1,
\end{equation}
where $vK$ and $Kv$ denote respectively the value group and residue field of $(K,v)$ and $\rr vK(X)/vK$ is the $\QQ$-dimension of the divisible hull $\QQ\tensor_{\ZZ} (vK(X)/vK)$. The above inequality is a consequence of [\ref{Bourbaki}, Chapter VI, \S 10.3, Theorem 1]. The extension $(K(X)|K,v)$ is said to be \textbf{valuation transcendental} if we have equality in (\ref{eqn Abh inequality}). The extension is said to be \textbf{value transcendental} if we have $\rr vK(X)/vK = 1$ and \textbf{residue transcendental} if $\trdeg[K(X)v:Kv]=1$. Throughout this article, we will assume that the extension $(K(X)|K,v)$ is valuation transcendental. 

\pars Minimal pairs of definition have been used with great success in the study of valuation transcendental extensions [cf. \ref{AP sur une classe}, \ref{APZ characterization of residual trans extns}, \ref{APZ2 minimal pairs}, \ref{Dutta min fields implicit const fields}]. A pair $(a,\g) \in \overline{K}\times v\overline{K}(X)$ is said to be a \textbf{minimal pair of definition for $v$ over $K$} if it satisfies the following conditions:
\sn (MP1) $v(X-a) = \g = \max v(X-\overline{K})$,
\n (MP2) $v(a-b)\geq \g \Longrightarrow [K(b):K] \geq [K(a):K]$ for all $b\in\overline{K}$, \\
where 
\[ v(X-\overline{K}):= \{ v(X-c) \mid c\in\overline{K} \}. \]
A valuation transcendental extension always admits a minimal pair of definition. It has been observed in [\ref{Kuh value groups residue fields rational fn fields}, Theorem 3.11] that $(K(X)|K,v)$ is value transcendental if and only if $\g\notin v\overline{K}$, that is, if and only if $\g$ is not a torsion element modulo $vK$. Further, it follows from [\ref{Kuh value groups residue fields rational fn fields}, Lemma 3.3] that $(K(X)|K,v)$ is value (residue) transcendental if and only if $(L(X)|L,v)$ is also value (residue) transcendental, where $L$ is an arbitrary algebraic extension of $K$.

\pars The goal of this article is twofold:
\begin{itemize}
	\item associate a uniquely determined positive integer with a valuation transcendental extension,
	\item show that the said integer encodes important information regarding the implicit constant field of the extension (defined later).
\end{itemize}
We first prove the following result in Section \ref{Sec proof of Thm 1.1}:

\begin{Theorem}\label{Thm central}
	Take a minimal pair of definition $(a,\g)$ for $v$ over $K$. Then
	\begin{equation}
		(vK(a,X):vK(X))[K(a,X)v:K(X)v] = j,
	\end{equation} 
	where $v(a-a_i) \geq \g$ for exactly $j$ many conjugates $a_i$ of $a$ over $K$, including $a$ itself and counting multiplicities. 
\end{Theorem}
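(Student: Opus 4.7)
The plan is to exploit the minimal pair $(a,\gamma)$ to compute $vK(X)$ and $K(X)v$ explicitly, compare with the easily-described $vK(a,X)$ and $K(a,X)v$ (since $(a,\gamma)$ is trivially a minimal pair of definition for $v$ over $L := K(a)$), and then multiply the two indices. Let $p(Y) \in K[Y]$ denote the minimal polynomial of $a$ and set $n := [L:K]$.

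First compute $v(p(X))$: using $p(X) = \prod_i(X-a_i)$ together with (MP1) and the ultrametric, one has $v(X-a_i) = \gamma$ exactly when $a_i \in S$ and $v(X-a_i) = v(a-a_i)$ otherwise, yielding $v(p(X)) = j\gamma + \beta$ with $\beta := \sum_{a_i \notin S} v(a-a_i) \in vL$. The key lemma is: \emph{for any $g \in K[X]$ with $\deg g < n$, $v(g(X)) = v(g(a))$ and $\overline{g(X)/g(a)} = 1$.} Indeed, any root $r$ of $g$ in $\overline{K}$ has $[K(r):K] \leq \deg g < n$, hence by the contrapositive of (MP2), $v(a-r) < \gamma$; then each factor $(X-r)/(a-r) = 1 + (X-a)/(a-r)$ has residue $1$, the displacement having positive value $\gamma - v(a-r)$. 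Multiplying over the roots of $g$ gives the claim. Applied to the $p$-adic expansion $h = \sum_k g_k(X) p(X)^k$ ($\deg g_k < n$) of any $h \in K[X]$, this yields $v(h(X)) = \min_k\bigl(v(g_k(a)) + k\,v(p(X))\bigr)$, whence $vK(X) = vL + \ZZ v(p(X)) = vL + \ZZ j\gamma$ (since $\beta \in vL$). Comparing with $vK(a,X) = vL + \ZZ\gamma$, and letting $e_L$ denote the order of $\gamma + vL$ in $(vL + \ZZ\gamma)/vL$ (taken as $\infty$ in the value-transcendental case, with $\gcd(j,\infty) := j$), a direct index computation gives $(vK(a,X):vK(X)) = \gcd(j, e_L)$.

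For the residue-field degree, the value-transcendental case is immediate since $K(a,X)v = K(X)v = Lv$. In the residue-transcendental case, $K(a,X)v = Lv(\bar z_L)$ where $z_L := (X-a)^{e_L}/c_L$ for some $c_L \in L$ with $v(c_L) = e_L\gamma$. For $h = \sum_k g_k(X) p(X)^k \in K[X]$ with $v(h) = 0$, a summand $g_k(X)p(X)^k$ can attain value $0$ only if $-k\,v(p(X)) \in vL$, which forces $m \mid k$ for $m := e_L/\gcd(j, e_L)$. Expanding $p(X)^{lm}$ around $t := X-a$ and using $t^{e_L} = z_L c_L$, each such summand contributes a term $\lambda_l \bar z_L^{lj'}$ with $\lambda_l \in Lv$, where $j' := jm/e_L = j/\gcd(j,e_L)$. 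Conversely, choosing $g_m \in K[X]$ with $v(g_m(a)) = -m\,v(p(X))$ produces an explicit $h \in K(X)$ with $\bar h$ of the form $\lambda \bar z_L^{j'}$, $\lambda \in Lv^\times$. Hence $K(X)v = Lv(\bar z_L^{j'})$ and $[K(a,X)v:K(X)v] = j'$. Multiplying, $\gcd(j,e_L) \cdot j' = j$.

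The main obstacle is the residue-transcendental computation of $K(X)v$. Establishing $\bar h \in Lv[\bar z_L^{j'}]$ requires identifying the leading Taylor coefficient of $p(Y+a)$ as $c_j = (-1)^{n-j}\prod_{a_i \notin S}(a-a_i)$ with $v(c_j) = \beta$ (the unique minimizer of a Newton-polygon computation over the elementary symmetric functions of the $b_i := a_i - a$), so that $p(X)^{lm}/(c_j^{lm}\, t^{jlm})$ has residue $1$ and the leading $\bar z_L$-behaviour of $\bar h$ is cleanly isolated.
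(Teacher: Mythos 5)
Your proposal is correct and follows essentially the same strategy as the paper's proof: both hinge on the Taylor expansion of a power of the minimal polynomial $Q$ around $X=a$, the observation that among the contributing products $\prod(a_{t}-a)$ the value is uniquely minimized when exactly the ``far'' conjugates $a_i$ with $v(a-a_i)<\g$ appear (a Newton--polygon/elementary-symmetric-function argument), and the lemma that for $g\in K[X]$ with $\deg g<[K(a):K]$ one has $v(g(X))=v(g(a))$ with $\overline{g(X)/g(a)}=1$ (your ``key lemma'' is the paper's cited Lemma~6.1 of [Dutta]). The organizational difference is that the paper computes the residue degree indirectly, by sandwiching the subfield $K(a)v(f^jhQ^Ev)$ between $K(X)v$ and $K(a,X)v$ and showing $f^jhQ^Ev$ has degree exactly $j$ in the generator $f(X)(X-a)^Ev$, whereas you compute $K(X)v$ directly inside $K(a,X)v$ via a $p$-adic expansion, obtaining the explicit split $j=\gcd(j,e_L)\cdot\bigl(j/\gcd(j,e_L)\bigr)$; the paper instead cites [APZ, Thm.~2.1] for what your $p$-adic computation rederives.

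One loose thread worth tightening: you assert $\beta\in vL$ already in the value-group step, but as written you only justify it in the closing ``main obstacle'' remark. The justification is that $c_j$ is the coefficient of $Y^j$ in $p(Y+a)\in L[Y]$, hence $c_j\in L$, and the ultrametric with the unique minimizer gives $v(c_j)=\beta$; this should be stated before it is used to conclude $vK(X)=vL+\ZZ j\g$. Also, the displayed identity $c_j=(-1)^{n-j}\prod_{a_i\notin S}(a-a_i)$ is not correct as an equality --- $c_j$ is $\pm\,\mathcal{E}_{n-j}(a_2-a,\dots,a_n-a)$, of which your product is only the dominant monomial --- though the conclusion $v(c_j)=\beta$ that you actually use is right. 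Finally, passing from the key lemma to $v(h)=\min_k\bigl(v(g_k(a))+k\,v(p(X))\bigr)$ is not a one-liner: one needs that the $t$-adic support of each $g_k(X)p(X)^k$ is dominated uniquely at $s=jk$ (which does follow from the key lemma plus the uniqueness of the minimizer of $p$), so that no cancellation can lower the value; spelling this out would make the $p$-adic expansion step airtight.
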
  
Given a minimal pair of definition $(a,\g)$ for $v$ over $K$, we will denote the integer $j$ as defined in Theorem \ref{Thm central} by $j(a,K,\g)$. When $K$ and $\g$ are implicitly understood, we will simply denote it by $j(a)$. We mention here that when the valuation $v$ is induced by a pseudo monotone sequence $M$ in $(K,v)$, the integer $j(a)$ is also referred to as the dominating degree of the minimal polynomial of $a$ over $K$ with respect to $M$ [cf. \ref{Peruginelli, Spirito - extend valns pseudo monotone}, \ref{Dutta rank implicit const pms}]. 

\pars For any residue transcendental extension $(K(X)|K,v)$ with a minimal pair of definition $(a,\g)$, we construct a value transcendental extension $w$ of $v$ to $\overline{K}(X)$ with a minimal pair of definition $(a,\G)$ such that $j(a,K,\g) = j(a,K,\G)$. This observation is then used to obtain the following result:  
\begin{Theorem}\label{Thm independence of j}
	Take minimal pairs of definition $(a,\g)$ and $(a^\prime,\g)$ for $v$ over $K$. Then
	\[ j(a) = j(a^\prime). \]
\end{Theorem}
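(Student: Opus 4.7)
The plan is to reduce to the value transcendental case via the construction preceding the theorem, and then to compare the two expressions for $j(a)$ and $j(a')$ furnished by Theorem \ref{Thm central} through a tower argument based on the composite field $L := K(a,a')$. The starting observations are the following. Since $v(X-a) = v(X-a') = \g$, the ultrametric inequality forces $v(a-a') \geq \g$, and applying condition (MP2) in both directions yields $[K(a):K] = [K(a'):K] =: n$. Moreover, the sets $\{c \in \overline{K} : v(a-c) \geq \g\}$ and $\{c \in \overline{K} : v(a'-c) \geq \g\}$ coincide; call this common set $B$. Thus $j(a)$ and $j(a')$ count, with multiplicities, the conjugates of $a$ and of $a'$ respectively that lie in the same subset $B$ of $\overline{K}$.

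If $(K(X)|K,v)$ is residue transcendental, I would invoke the construction announced immediately before the theorem to pass to a value transcendental extension $w$ of $v$ to $\overline{K}(X)$, with $w(X-a) = \G$, making $(a,\G)$ a minimal pair of definition for $w$ over $K$ and $j(a,K,\g) = j(a,K,\G)$. Since the construction depends on $v|_K$, on $\G$, and on the common ball $B$ rather than on the specific minimal pair, the same $w$ yields $(a',\G)$ as a minimal pair of definition for $w$ over $K$ with $j(a',K,\g) = j(a',K,\G)$. Hence it suffices to prove the theorem when $\g \notin v\overline{K}$, which I assume henceforth.

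In this value transcendental setting, Theorem \ref{Thm central} gives $j(a) = (vK(a,X):vK(X))[K(a,X)v:K(X)v]$ and the analogous expression for $a'$. I would then compute $(vL(X):vK(X))[L(X)v:K(X)v]$ in two ways by factoring the tower $L(X) \supset K(b,X) \supset K(X)$ through both $b=a$ and $b=a'$. Multiplicativity of ramification index and residue degree, together with the identity $[L:K(a)] = [L:K(a')]$ (immediate from $[K(a):K] = [K(a'):K]$ and the tower law), reduces the desired equality $j(a) = j(a')$ to the ``top-factor'' equality $(vL(X):vK(a,X))[L(X)v:K(a,X)v] = (vL(X):vK(a',X))[L(X)v:K(a',X)v]$.

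The main obstacle is establishing this top-factor equality. A promising approach is to exhibit $v$ on $K(b)(X)$ explicitly as a Gauss-type valuation centered at $a$, namely $v\bigl(\sum_i c_i(X-a)^i\bigr) = \min_i(v(c_i) + i\g)$ for $c_i \in K(b)$; since $\g \notin v\overline{K}$, this minimum is uniquely attained. The same formula with coefficients in $L$ then ought to extend $v$ uniquely to $L(X)$ without introducing defect, forcing each top factor to equal $[L:K(b)]$ and hence collapsing both to the common value $[L:K(a)] = [L:K(a')]$. Once this structural lemma is in place, the tower computation is purely formal.
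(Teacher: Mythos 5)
Your reduction to the value transcendental case via the auxiliary valuation $w = v_{a,\Gamma}$ matches the paper's strategy exactly; the paper additionally cites [\ref{AP sur une classe}, Proposition 3] to guarantee $v_{a,\Gamma} = v_{a',\Gamma}$, which you use implicitly when asserting that ``the same $w$'' works for both pairs.

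The gap is in the value transcendental case. Your tower argument correctly reduces the claim to the top-factor equality
\[
(vL(X):vK(a,X))\,[L(X)v:K(a,X)v] \;=\; (vL(X):vK(a',X))\,[L(X)v:K(a',X)v], \qquad L := K(a,a'),
\]
but your proposed proof of it is false. Since $\g \notin v\overline K$, one has $vK(a,X) = vK(a)\dirsum\ZZ\g$, $K(a,X)v = K(a)v$, $vL(X) = vL\dirsum\ZZ\g$, $L(X)v = Lv$, so the left top factor equals $(vL:vK(a))[Lv:K(a)v]$, which is only $\leq [L:K(a)]$. Your structural lemma asserts that the Gauss-type valuation centered at $a$ extends uniquely and without defect from $K(a,X)$ to $L(X)$, forcing equality with $[L:K(a)]$. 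This is not so: the Gauss formula $v\bigl(\sum c_i(X-a)^i\bigr) = \min_i(vc_i + i\g)$ with $c_i \in L$ presupposes a choice of extension of $v$ from $K(a)$ to $L$, and distinct such extensions (if any) produce distinct extensions of $v|_{K(a,X)}$ to $L(X)$; moreover, whatever defect $L|K(a)$ has is inherited. Concretely, if $K=\QQ$ with the $2$-adic valuation and $L|K(a)$ is an unramified degree-$2$ extension that splits (e.g., obtained by adjoining $\sqrt{17}$), then $(vL:vK(a))[Lv:K(a)v] = 1 \neq 2 = [L:K(a)]$. So the structural lemma fails, and with it your proof of the top-factor equality. (That equality is in fact true, but only as a \emph{consequence} of the theorem together with Theorem \ref{Thm central} applied along the tower; it is not an easy independent lemma.)

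The paper's route in the value transcendental case avoids the tower entirely. Writing $Q, Q'$ for the minimal polynomials of $a, a'$ over $K$, one has $vQ = j(a)\g + \a$ and $vQ' = j(a')\g + \a'$ with $\a, \a' \in v\overline{K}$; the cited [\ref{Dutta min fields implicit const fields}, Lemma 3.10] gives $vQ = vQ'$, hence $(j(a)-j(a'))\g \in v\overline{K}$, and since $\g$ is not in the divisible group $v\overline{K}$ this forces $j(a) = j(a')$. You would need to replace your structural lemma with an argument of this kind, or find another way to establish the top-factor equality that does not rely on the false claim about uniqueness and defectlessness of the Gauss extension.
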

Theorem \ref{Thm independence of j} illustrates that the integer $j(a)$ depends solely on the valued extension $(\overline{K}(X)|K,v)$ and is independent of the choice of the minimal pair of definition for $v$ over $K$. Hence the notation $j(v,K)$ would be more suited as it reflects the independence of $j$. However, for the sake of continuity we persist with the notation $j(a,K,\g)$ for the remainder of the article. 

\pars Theorem \ref{Thm central} is then applied to the computation of the implicit constant field of the extension $(K(X)|K,v)$. Given an extension of $v$ to $\overline{K(X)}$, the \textbf{implicit constant field} of the extension $(K(X)|K,v)$ is defined as 
\[ IC(K(X)|K,v) := \overline{K} \sect K(X)^h, \]
where $K(X)^h$ is the henselization of $K(X)$ [cf. Section \ref{Sec prelim}]. Implicit constant fields were introduced by Kuhlmann in [\ref{Kuh value groups residue fields rational fn fields}] to construct extensions of $v$ to $K(X)$ with prescribed value groups and residue fields. The problem of the explicit computation of implicit constant fields for valuation transcendental extensions was considered in [\ref{Dutta min fields implicit const fields}], where it was studied via minimal pairs of definition. Given a minimal pair of definition $(a,\g)$ for $v$ over $K$, it has been observed in [\ref{Dutta min fields implicit const fields}, Theorem 1.1] that 
\[ IC(K(X)|K,v) \subseteq K(a)^h. \]
Under the additional assumptions that $a$ is separable over $K$ and there is a unique extension of $v$ from $K$ to $K(a)$, we observe in [\ref{Dutta min fields implicit const fields}, Theorem 1.3] that 
\[ IC(K(X)|K,v) \subsetneq K(a)^h \text{ whenever } \g \leq \kras(a,K), \]
where 
\[ \kras(a,K):= \max \{ v(a-\s a) \mid \s\in \Gal(\overline{K}|K) \text{ and } \s a \neq a \}. \]
In this article, we observe that $j(a)$ is a more natural candidate for the investigation of $IC(K(X)|K,v)$ than $\kras(a,K)$. Specifically, in Theorem \ref{Thm j(a) divides deg K(a)h over IC} we show that 
\[ j(a) \text{ divides } [K(a)^h : IC(K(X)|K,v)]. \]
This observation is independent of the separability of $a$ and also does not depend on the number of extensions of $v$ from $K$ to $K(a)$. When $(K,v)$ is a defectless field [cf. Section \ref{Sec prelim}], then we obtain that 
\[ j(a) = [K(a)^h : IC(K(X)|K,v)]. \]

\pars Given an extension of $v$ to $\overline{K(X)}$, we further observe in [\ref{Dutta min fields implicit const fields}, Theorem 1.1] that 
\begin{equation}\label{eqn IC when v.t.}
	(K(a)\sect K^r)^h \subseteq IC(K(X)|K,v)
\end{equation}
when $v$ is value transcendental, and 
\begin{equation}\label{eqn IC when r.t.}
	(K(a)\sect K^i)^h \subseteq IC(K(X)|K,v)
\end{equation}
when $v$ is residue transcendental, where $K^r$ and $K^i$ denote the absolute ramification field and absolute inertia field of $(K,v)$ [cf. Section \ref{Sec prelim}]. The fact that $K^i \subseteq K^r$ implies that (\ref{eqn IC when v.t.}) gives a tighter bound that (\ref{eqn IC when r.t.}). It is a natural question to inquire whether (\ref{eqn IC when v.t.}) also holds when $v$ is residue transcendental. We prove the following result using Theorem \ref{Thm central}:

\begin{Theorem}\label{Thm IC bounds}
	Take a minimal pair of definition $(a,\g)$ for $v$ over $K$ and fix an extension of $v$ to $\overline{K(X)}$. Then
	\[ (K(a)\sect K^r)^h \subseteq IC(K(X)|K,v) \subseteq K(a)^h. \]
\end{Theorem}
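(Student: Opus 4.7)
The upper bound $IC(K(X)|K,v)\subseteq K(a)^h$ is already established in [\ref{Dutta min fields implicit const fields}, Theorem 1.1], so only the lower bound requires new work; the value transcendental case of the lower bound is exactly (\ref{eqn IC when v.t.}). The task is to extend that inclusion to the residue transcendental case.

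My plan is to reduce to the value transcendental case using the companion valuation $w$ built in the discussion preceding Theorem \ref{Thm independence of j}: $w$ is a value transcendental extension of $v|_K$ to $\overline{K(X)}$ admitting $(a,\G)$ as a minimal pair of definition with $j(a,K,\G)=j(a,K,\g)$. Applying (\ref{eqn IC when v.t.}) to $w$ yields $(K(a)\sect K^r)^h\subseteq IC(K(X)|K,w)$, so the theorem follows once I prove the equality $IC(K(X)|K,v)=IC(K(X)|K,w)$.

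To compare the two implicit constant fields, I plan to establish the following Galois-theoretic description: for any valuation transcendental extension with minimal pair $(a,\g)$, the intersection $IC(K(X)|K,v)\sect K^{\sep}$ equals the fixed field in $K^{\sep}$ of
\[ H_v := \{\s\in\Gal(K^{\sep}|K)\mid v\circ\s=v \text{ and } v(a-\s a)\geq\g\}. \]
This rests on identifying $K(X)^h$ as the fixed field in $K(X)^{\sep}$ of the decomposition group $D$ of $v$ over $K(X)$, together with the observation that the image of the restriction map $D\to\Gal(K^{\sep}|K)$ is exactly $H_v$. The minimal pair property gives the identity $v(X-c)=\min\{\g,v(a-c)\}$ for all $c\in K^{\sep}$, which converts the condition that the extension of $\s$ to $K^{\sep}(X)$ fixing $X$ preserves $v$ into the single requirement $v(a-\s a)\geq\g$; a further $v$-preserving lift to $K(X)^{\sep}$ is then supplied by the standard Galois theory of valued fields. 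Since $(K(a)\sect K^r)^h\subseteq K^{\sep}$, the possibly inseparable part of $K(a)$ needs no extra attention.

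With this description in hand, the desired equality reduces to $H_v=H_w$. The inclusion $H_w\subseteq H_v$ is immediate from $\G\geq\g$. For the reverse, the $H_v$-orbit of $a$ equals the intersection of the full decomposition orbit of $a$ with the close-conjugate set $\{a_i:v(a-a_i)\geq\g\}$, and similarly the $H_w$-orbit uses the threshold $\G$; the hypothesis $j(a,K,\G)=j(a,K,\g)$ forces the two close-conjugate sets to coincide. Both groups share the same stabilizer $\{\s\in\Gal(K^{\sep}|K):v\circ\s=v,\,\s a=a\}$ of $a$, so an orbit-stabilizer count gives $|H_v|=|H_w|$ and hence $H_v=H_w$. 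The principal obstacle is the Galois-theoretic description, which rests crucially on the minimal pair behaviour of $v(X-c)$.
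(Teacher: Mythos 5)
Your proposal takes a genuinely different route from the paper. The paper argues arithmetically: it chooses $b$ with $K(b) = K(a)\sect K^r$, applies the Ostrowski lemma and Theorem \ref{Thm central} to write $[K(b,X)^{h(v)}:K(X)^{h(v)}]$ as a ratio of $j$-invariants times a defect, invokes [\ref{Dutta Kuh abhyankars lemma}, Theorem 3] to kill the defect since $b\in K^r$, and then uses the value transcendental companion $w$ to force $j(a,K,\g)=j(a,K(b),\g)$, whence the degree is $1$. Your plan bypasses the defect bookkeeping entirely by positing a Galois description of the separable part of the implicit constant field. That description, $IC(K(X)|K,v)\sect K^\sep = \mathrm{Fix}(H_v)$, is correct and provable by the argument you sketch (every $\tau$ in the decomposition group of $v$ over $K(X)$ fixes $X$, so $v(X-\tau a)=\g$ forces $v(a-\s a)\ge\g$; conversely the formula $v(X-c)=\min\{\g,v(a-c)\}$ shows the lift of any $\s\in H_v$ fixing $X$ preserves $v$ on $K^\sep(X)$, and the surjectivity of decomposition groups in Galois towers supplies the further lift), but you leave it as an acknowledged obstacle rather than a proved lemma.

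There are, however, two concrete errors in the comparison step. First, $\G=(\g,-1)$ satisfies $\G<\g$ in the lexicographic order, not $\G\ge\g$ as you assert, so the inclusion $H_w\subseteq H_v$ does not follow ``immediately'' in that direction. Second, your orbit-stabilizer argument rests on the claim that $j(a,K,\G)=j(a,K,\g)$ forces the two close-conjugate sets $\{a_i: v(a-a_i)\geq\g\}$ and $\{a_i: v(a-a_i)\geq\G\}$ to coincide. Equality of cardinalities alone cannot force equality of sets, so this step is logically unsound as written. Both problems dissolve once you notice that for any $\a\in v\overline K$ (embedded as $(\a,0)$) the comparison $\a\ge\G$ is equivalent to $\a\ge\g$ --- this is exactly equation (\ref{eqn v(a-a prime) geq gamma}) from Lemma \ref{Lemma j(a,g) = j(a,G)}. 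Applied to $\a=v(a-\s a)$ it gives $H_v=H_w$ outright, with no need to invoke $j$-invariants or orbit counts at all. With that correction and a written-out proof of the Galois description, your route goes through and is cleaner than the paper's; as stated, it has a gap in the comparison step in addition to the unproven key lemma.
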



\section{Preliminaries}\label{Sec prelim}

Throughout this article, we denote the value of an element $a$ by $va$ and its residue by $av$. The valuation ring of a valued field $(K,v)$ will be denoted by $\mathcal{O}_K$. The compositum of two fields $k_1$ and $k_2$ contained in some overfield $\O$ will be denoted by $k_1. k_2$. 

\pars Fixing an extension $w$ of $v$ to $\overline{K}$, we define the following distinguished groups:
\begin{align*}
	G^{d(w)} &:= \{ \s \in \Gal(\overline{K}| K) \mid w\circ\s = w \text{ on } K^\sep \},\\
	G^{i(w)} &:= \{ \s \in \Gal(\overline{K}| K) \mid w(\s a - a) > 0 \text{ for all } a\in \mathcal{O}_{K^\sep} \},\\
	G^{r(w)} &:= \{ \s \in \Gal(\overline{K}| K) \mid w(\s a - a) > wa \text{ for all } a\in K^\sep \setminus \{0\} \},
\end{align*}
where $K^\sep$ denotes the separable-algebraic closure of $K$. The corresponding fixed fields in $K^\sep$ will be denoted by $K^{d(w)}$, $K^{i(w)}$ and $K^{r(w)}$ and they are called the \textbf{absolute decomposition field}, \textbf{absolute inertia field} and the \textbf{absolute ramification field} of $(K,v)$. If the extension $w$ is clear from context, we will simply write them as $K^d$, $K^i$ and $K^r$. We have the following chain of inclusions: 
\[ K^d \subseteq K^i \subseteq K^r. \]

\pars A valued field $(K,v)$ is said to be \textbf{henselian} if $v$ admits a unique extension to $\overline{K}$. Every valued field has a minimal separable-algebraic extension which is henselian. This extension is unique up to valuation preserving isomorphisms over $K$, and we can consider it to be the same as the absolute decomposition field. We will call this extension the \textbf{henselization} of $(K,v)$. Clearly, the henselization depends on the choice of the extension of $v$ to $\overline{K}$. When the extension of $v$ to $\overline{K}$ is tacitly understood, we will denote the henselization by $K^h$. The henselization of $(K,v)$ with respect to an extension $w$ of $v$ to $\overline{K}$ will be denoted by $K^{h(w)}$. 

\pars Henselization is an \textbf{immediate} extension, that is, $vK^h = vK$ and $K^hv = Kv$. An algebraic extension of henselian valued fields is again henselian. For any algebraic extension $L$ of $K$, we have that $L^h = L.K^h$.  

\pars For a valued field $(K,v)$ admitting a unique extension of $v$ to a finite extension $L$, we have the \textbf{Lemma of Ostrowski} which states that
\[ [L:K] = (vL:vK)[Lv:Kv] p^d \text{ for some }d\in\NN, \]
where $p:= \ch Kv$ when $\ch Kv>0$ and $p:=1$ otherwise. The number $p^d$ is said to be the \textbf{defect} of the extension $(L|K,v)$ and will be denoted by $d(L|K,v)$. The extension $(L|K,v)$ is said to be \textbf{defectless} if $d(L|K,v) = 1$. Defect satisfies the following \textbf{multiplicative property}: if $L|F|K$ is a tower of fields such that $L|K$ is finite and $v$ admits a unique extension from $K$ to $L$, then 
\[ d(L|K,v) = d(L|F,v) d(F|K,v). \]
An arbitrary algebraic extension $(\O|K,v)$ is said to be defectless if $d(L|K,v) = 1$ for every finite subextension $L|K$. 

\pars Observe that the Lemma of Ostrowski is applicable in particular to henselian valued fields. It is well-known that $(K^r|K,v)$ is a defectless extension for a henselian valued field $(K,v)$. A henselian field $(K,v)$ is said to be defectless if $d(L|K,v)=1$ for every finite extension $(L|K,v)$. We will say that an arbitrary valued field is defectless if its henselization is defectless.


\section{Proof of Theorem \ref{Thm central}}\label{Sec proof of Thm 1.1}

\begin{Lemma}\label{Lemma d = vg}
	Take a minimal pair of definition $(a,\g)$ for $v$ over $K$. Then for any $\d\in vK(a)$, there exists a polynomial $g(X)\in K[X]$ with $\deg g < [K(a):K]$ such that $vg = \d$.
\end{Lemma}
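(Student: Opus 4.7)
The plan is to reduce the lemma to the claim that for every nonzero $h(X)\in K[X]$ with $\deg h < n := [K(a):K]$, we have $v(h(X)) = v(h(a))$. Granting this claim, one finishes as follows: since $a$ is algebraic over $K$, we have $K(a) = K[a]$, so any $c\in K(a)^*$ with $vc = \d$ can be written as $c = g(a)$ for some $g\in K[X]$ of degree less than $n$; the claim then yields $v(g(X)) = v(g(a)) = \d$, which is precisely the conclusion of the lemma.

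To establish the claim, I would factor $h(X) = c_0\prod_{i}(X-b_i)$ over $\overline{K}$, with $c_0 \in K^*$. For each root $b_i$, its minimal polynomial over $K$ divides $h(X)$, hence $[K(b_i):K] \leq \deg h < n = [K(a):K]$. The contrapositive of (MP2) then forces $v(a-b_i) < \g$. Since $v(X-a) = \g > v(a-b_i)$, the ultrametric inequality applied to $X - b_i = (X-a) + (a-b_i)$ yields $v(X-b_i) = v(a-b_i)$. Summing these equalities over $i$ and adding $vc_0$ would then give $v(h(X)) = v(h(a))$, establishing the claim.

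The main step is the use of (MP2) to bound $v(a-b_i)$ strictly below $\g$; once that bound is in place, the ultrametric inequality handles the rest. The only subtlety to verify is the strict inequality $[K(b_i):K] < [K(a):K]$ (not merely $\leq$), so that (MP2) actually applies; this is automatic because $b_i$ is a root of a polynomial of degree strictly less than $n$. I do not foresee any serious obstacle beyond this book-keeping.
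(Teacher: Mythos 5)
Your proof is correct and follows essentially the same route as the paper: both proofs start by writing $\d = v(g(a))$ for some $g\in K[X]$ of degree $< [K(a):K]$ and then invoke the fact that $vg = v(g(a))$ for such $g$. The only difference is that the paper cites [APZ, Theorem 2.1] and [Dutta, Lemma 3.2] for that last step, whereas you re-derive it directly from (MP1), (MP2) and the ultrametric inequality via factoring $g$ over $\overline{K}$ -- a clean, self-contained unpacking of the cited results.
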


\begin{proof}
	Set $n:= [K(a):K]$. The fact that $\d\in vK(a)$ implies that we can take $c_i \in K$ such that $\d = v \sum_{i=0}^{n-1}c_i a^i$. Define $g(X):= \sum_{i=0}^{n-1} c_i X^i \in K[X]$. Then $\deg g < [K(a):K]$. It now follows from [\ref{APZ characterization of residual trans extns}, Theorem 2.1] and [\ref{Dutta min fields implicit const fields}, Lemma 3.2] that $\d = v g(a) = vg$.
\end{proof}

\begin{Lemma}\label{Lemma K(a,X)v}
	Assume that $(K(X)|K,v)$ is residue transcendental. Take a minimal pair of definition $(a,\g)$ for $v$ over $K$. Let $E$ be the least positive integer such that $E\g\in vK(a)$. Take $f(X)\in K[X]$ with $\deg f < [K(a):K]$ such that $vf = -E\g$. Then 
	\[ K(a,X)v = K(a)v (f(X)(X-a)^Ev). \]

\end{Lemma}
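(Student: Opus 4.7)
The strategy is to observe that $(a,\gamma)$ is trivially a minimal pair of definition for $v$ over $K(a)$ (since $[K(a)(a):K(a)]=1$), so by the analog of [\ref{APZ characterization of residual trans extns}, Theorem 2.1] over $K(a)$, every polynomial $p(X) \in K(a)[X]$ with Taylor expansion $p(X) = \sum p_l (X-a)^l$ around $a$ satisfies $vp(X) = \min_l(vp_l + l\gamma)$. This yields $vK(a,X) = vK(a) + \mathbb{Z}\gamma$, $(vK(a,X):vK(a)) = E$, and $v\eta = 0$ for $\eta := f(X)(X-a)^E$, so $\eta v \in K(a,X)v \setminus \{0\}$; write $\overline\eta := \eta v$ for brevity. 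A pivotal technical fact needed throughout is $\overline{f(X)/f(a)} = 1$: expanding $f(X) = \sum_j c_j (X-a)^j$ with $c_j = f^{(j)}(a)/j! \in K[a]$, the minimum $-E\gamma = \min_j(vc_j + j\gamma)$ is uniquely attained at $j=0$. Indeed, equality at some $j \ge 1$ would force a coefficient $a_i$ of $f$ (with $i = l+j$, $l \ge 0$) to satisfy $va_i + v\binom{i}{j} + l\cdot va = -(E+j)\gamma$; combining with $va_i + i\cdot va \ge -E\gamma$ (from $vf(a) = -E\gamma$) yields $j(va - \gamma) \ge v\binom{i}{j} \ge 0$, contradicting the minimal pair condition $va < \gamma$ (which follows from (MP2) applied to $b=0 \in K$ when $a \notin K$; the case $a \in K$ is trivial since then $f$ is a constant).

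Given this key fact, transcendence of $\overline\eta$ over $K(a)v$ follows as follows. Suppose $\sum_{i=0}^N \bar c_i \overline\eta^i = 0$ with lifts $c_i \in \mathcal{O}_{K(a)}$ normalized so that $\min_i vc_i = 0$. Writing $f(X)^i = f(a)^i + h_i(X)$, the key fact extends inductively to give $vh_i(X) > -iE\gamma$. Then each $c_i \eta^i = c_i f(X)^i (X-a)^{iE}$ decomposes as $c_i f(a)^i (X-a)^{iE} + c_i h_i(X)(X-a)^{iE}$; the first summand is a monomial in $(X-a)$ of degree $iE$ and value $vc_i$, and the second has value strictly greater than $vc_i$. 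Since monomials of distinct degrees do not cancel, $v(\sum_i c_i \eta^i) = \min_i vc_i = 0$, contradicting $v(\sum_i c_i \eta^i) > 0$.

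For the inclusion $K(a,X)v \subseteq K(a)v(\overline\eta)$, take $\xi = p/q$ with $p, q \in K(a)[X]$ of common value $m$ and $v\xi = 0$. Writing $m = \mu + r\gamma$ uniquely with $\mu \in vK(a)$ and $0 \le r < E$, the expansion formula forces the minimum-achieving terms in $p$ and $q$ to have exponents $\equiv r \pmod E$; discarding higher-order terms (which do not affect the residue) and cancelling the factor $(X-a)^r$, the residue $\overline\xi$ equals $\overline{P(u)/Q(u)}$ for $u := (X-a)^E$ and suitable $P, Q \in K(a)[T]$ of common value $\mu$. Choose $\phi \in K(a)$ with $v\phi = -\mu$. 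Using $f(X) = f(a)(1 + g(X))$ with $vg(X) > 0$ (by the key fact), we have $u = (\eta/f(a))(1 + g(X))^{-1}$ and $u^k = (\eta/f(a))^k (1+g)^{-k}$. Substituting into $\phi P(u)$ and $\phi Q(u)$ (both of value $0$) and taking residues yields elements of $K(a)v[\overline\eta]$; hence $\overline\xi = \overline{\phi P(u)}/\overline{\phi Q(u)} \in K(a)v(\overline\eta)$.

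The main obstacle is establishing the key technical fact $\overline{f(X)/f(a)} = 1$, where the minimal pair condition $va < \gamma$ enters crucially to prevent Taylor coefficients of $f$ from contributing additional minimum-achieving terms. Once this is in place, both the transcendence of $\overline\eta$ and the reverse inclusion reduce to careful bookkeeping with the expansion formula.
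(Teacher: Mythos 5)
Your overall plan --- compute $v\eta=0$ for $\eta:=f(X)(X-a)^E$ via the expansion formula over $K(a)$, show $\eta v$ is transcendental over $K(a)v$, then show $K(a,X)v\subseteq K(a)v(\eta v)$ --- is sound, and it is a genuinely different route from the paper's. The paper's proof is a two-line citation chain: it applies [APZ, Theorem 2.1] over the base field $K(a)$ with $d\in K(a)$, $vd=-E\gamma$, to get $K(a,X)v=K(a)v(d(X-a)^Ev)$, and then replaces $d$ by $f$ using [Dutta, Lemma 6.1] to see $(d/f)v\in K(a)v$. You instead reprove these facts from scratch. Both the transcendence step and the reverse inclusion are correct \emph{provided} your ``key technical fact'' $\overline{f(X)/f(a)}=1$ holds --- and it does; it is essentially the content of [Dutta, Lemma 6.1], which the paper invokes precisely here.

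However, your proof of the key fact has a genuine gap. You assert ``$va_i+i\cdot va\geq -E\gamma$ (from $vf(a)=-E\gamma$).'' This does not follow from the ultrametric inequality, which gives $\min_i(va_i+i\,va)\leq vf(a)$, not $\geq$; individual monomials $a_ia^i$ in $f(a)=\sum_i a_ia^i$ can have value strictly \emph{below} $vf(a)$ when there is cancellation. A concrete counterexample with the exact hypotheses of the lemma: take $K=\QQ_2$ with $v2=1$, $a=1+\sqrt 2$ (so $va=0$, $vK(a)=\tfrac12\ZZ$), $\gamma=\tfrac34$ (one checks $(a,\gamma)$ is a minimal pair since $\max_{b\in K}v(a-b)=\tfrac12<\gamma$), whence $E=2$ and $-E\gamma=-\tfrac32$. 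Take $f(X)=\tfrac{X-1}{4}$, so $vf=vf(a)=v(\sqrt2/4)=-\tfrac32$, but $va_0=va_1+va=-2<-\tfrac32$. The chain of inequalities you derive from this false intermediate claim therefore does not close. The conclusion $\overline{f(X)/f(a)}=1$ is nonetheless true here and in general; the standard proof factors $f=c\prod_k(X-b_k)$ over $\overline K$, notes that each root $b_k$ has $[K(b_k):K]\leq\deg f<[K(a):K]$ so (MP2) forces $v(a-b_k)<\gamma$, and then $\frac{X-b_k}{a-b_k}=1+\frac{X-a}{a-b_k}$ has residue $1$ because $v\frac{X-a}{a-b_k}=\gamma-v(a-b_k)>0$; taking the product over $k$ gives $\overline{f(X)/f(a)}=1$. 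Replacing your Taylor-coefficient argument with this factorization argument (or simply citing [Dutta, Lemma 6.1]) repairs the proof; the remainder of your argument then goes through.
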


\begin{proof}
	Observe that $(K(a,X)|K(a),v)$ is a residue transcendental extension with minimal pair of definition $(a,\g)$. Take $d\in K(a)$ such that $vd = -E\g$. It follows from [\ref{APZ characterization of residual trans extns}, Theorem 2.1] that $K(a,X)v = K(a)v (d(X-a)^Ev)$. By Lemma \ref{Lemma d = vg}, we can take $f(X)\in K[X]$ with $\deg f < [K(a):K]$ such that $vf = vd$. It now follows from [\ref{Dutta min fields implicit const fields}, Lemma 6.1] that $\frac{d}{f}v \in K(a)v$. As a consequence, 
	\[ K(a,X)v = K(a)v (d(X-a)^E v) = K(a)v \mathlarger{\mathlarger{(}}(\frac{d}{f}v) (f(X)(X-a)^Ev)\mathlarger{\mathlarger{)}} = K(a)v (f(X)(X-a)^Ev). \]
\end{proof}

\pars We can now give a \textbf{proof of Theorem \ref{Thm central}}.

\begin{proof}
	Take the minimal polynomial $Q(X)$ of $a$ over $K$. Write
	\[ Q(X) = (X-a)(X-a_2) \dotsc (X-a_j) \dotsc (X-a_n), \]
	where $v(a-a_i) \geq \g$ for $2\leq i\leq j$ and $v(a-a_i)  <\g$ for all $i>j$. By definition, the assumption that $(a,\g)$ is a pair of definition for $v$ over $K$ implies that $v(X-b) = \min \{ \g, v(a-b) \}$ for all $b\in\overline{K}$. It follows that
	\[ vQ = j\g + \a, \text{ where } \a:= v(a-a_{j+1}) + \dotsc v(a-a_n) \in v\overline{K}. \]
	
	\pars We first assume that $(K(X)|K,v)$ is a value transcendental extension, that is, $\g$ is not a torsion element modulo $vK$. It follows from [\ref{Dutta min fields implicit const fields}, Remark 3.3] that 
	\[ vK(X) = vK(a) \dirsum \ZZ(j\g+\a) \text{ and } K(X)v = K(a)v. \]
	Further, we observe that $(a,\g)$ is a minimal pair of definition for $v$ over $K(a)$. Consequently, 
	\[ vK(a,X) = vK(a) \dirsum \ZZ\g \text{ and } K(a,X)v = K(a)v. \]
	The facts that $j\g+\a \in vK(a)\dirsum \ZZ\g$ and $\g$ is not a torsion element modulo $vK$ imply that $\a\in vK(a)$. Consequently, $vK(X) = vK(a)\dirsum \ZZ j \g$. It follows that $(vK(a,X):vK(X)) = j$ and hence
	\[ (vK(a,X):vK(X))[K(a,X)v:K(X)v] = j. \]

	\parm We now assume that $(K(X)|K,v)$ is residue transcendental, that is, $\g$ is a torsion element modulo $vK$. Set
	\[ e:= (vK(X):vK(a)) \text{ and } E:= (vK(a,X): vK(a)). \]
	Hence $E = \l e$ where $\l := (vK(a,X): vK(X))$. It follows from [\ref{APZ characterization of residual trans extns}, Theorem 2.1] that $e$ is the least positive integer such that $evQ \in vK(a)$. By Lemma \ref{Lemma d = vg}, we can take $g(X)\in K[X]$ with $\deg g < \deg Q$ such that $vg = -evQ$. We can then conclude from our observations in [\ref{Dutta min fields implicit const fields}, Remark 3.1] that
	\begin{equation}\label{eqn K(X)v}
		K(X)v = K(a)v(gQ^e v).
	\end{equation}  
	Observe that $(K(a,X)|K(a),v)$ is a residue transcendental extension with $(a,\g)$ as a minimal pair of definition and the corresponding minimal polynomial $X-a$. By definition, $v(X-a) = \g$. Similar arguments as above now imply that $E$ is the smallest positive integer such that $E\g\in vK(a)$. Take $f(X)\in K[X]$ with $\deg f < \deg Q$ such that $vf = -E\g$. Then by Lemma \ref{Lemma K(a,X)v},  
	\begin{equation}\label{eqn K(a,X)v}
		K(a,X)v = K(a)v (f(X)(X-a)^Ev).
	\end{equation}
The fact that $vQ = j\g+\a$ implies that 
\[ \l e vQ = EvQ = jE\g + E\a.   \]
The fact that $evQ, E\g \in vK(a)$ then implies that $E\a\in vK(a)$. Consequently, it follows from Lemma \ref{Lemma d = vg} that there exists $h(X) \in K[X]$ with $\deg h < \deg Q$ such that $vh = -E\a$. We have thus obtained that $\l vg - jvf - vh = 0$, that is, $v\frac{g^\l}{f^j h} = 0$. It follows from [\ref{Dutta min fields implicit const fields}, Lemma 6.1] that
\[ \frac{g^\l}{f^j h}v\in K(a)v. \]
As a consequence, 
\[ K(a)v ((gQ^ev)^\l) = K(a)v(g^\l Q^Ev) = K(a)v \mathlarger{\mathlarger{(}}(\frac{g^\l}{f^jh}v)(f^jhQ^E v)\mathlarger{\mathlarger{)}} = K(a)v (f^jhQ^Ev).  \] 
We observe from [\ref{APZ characterization of residual trans extns}, Theorem 2.1] that $gQ^ev$ is transcendental over $K(a)v$. Consequently, $g^\l Q^E v = (gQ^e)^\l v$ is also transcendental over $K(a)v$. Thus,
\begin{equation}\label{eqn [K(X)v : L]}
	[K(X)v: K(a)v (f^jhQ^Ev)] = [K(a)v(gQ^ev) : K(a)v(g^\l Q^Ev)] = \l.
\end{equation}
Observe that $X-a$ divides $Q(X)$ over $K(a)$. Hence we have an expression of the form 
\[ Q^E = \sum_{i=E}^{nE} c_i (X-a)^i \text{ where } c_i \in K(a). \]
It follows from [\ref{APZ characterization of residual trans extns}, Theorem 2.1] that $v c_i + i\g \geq EvQ$ for all $i$. Suppose that $vc_i + i\g = EvQ$ for some $i$ such that $E$ does not divide $i$. Then we can write $vc_i + i^\prime\g + tE\g = EvQ $ where $1\leq i^\prime\leq E-1$. The fact that $vc_i, E\g, EvQ \in vK(a)$ then implies that $i^\prime\g\in vK(a)$, which contradicts the minimality of $E$. Hence, 
\[ E \text{ divides }i \text{ whenever } vc_i + i\g = EvQ. \]
We now consider the expression
\[ f^j h Q^E = \sum_{i=E}^{nE} f^j h c_i (X-a)^i. \]
Observe that $v f^jhQ^E = 0$. The preceding observations now imply that
\[ E \text{ divides }i \text{ whenever } v f^j h c_i (X-a)^i = 0. \] 
Taking residues, we obtain that
\[ f^jhQ^E v = \sum_{i=1}^{n} f^j h c_{iE}(X-a)^{iE}v = \sum_{i=1}^{n} (f^{j-i}hc_{iE})v (f(X)(X-a)^E)^i v. \]
It follows from [\ref{Dutta min fields implicit const fields}, Lemma 6.1] that $(f^{j-i}hc_{iE})v \in K(a)v$. As a consequence, 
\begin{equation}\label{eqn f^jhQ^Ev is a polynomial}
	f^j h Q^E v \in K(a)v [f(X)(X-a)^E v].
\end{equation}
The coefficient of $(f(X)(X-a)^E)^iv$ in $f^jhQ^Ev$ is given by $f^{j-i}hc_{iE}v$, where $c_{iE}$ is the coefficient of $(X-a)^{iE}$ in $Q^E$. Hence, 
\[ c_{iE} = (-1)^{nE-iE} \mathcal{E}_{nE-iE}(0, \dotsc, 0, a_2-a, \dotsc, a_2-a, \dotsc, a_n-a, \dotsc, a_n-a), \]
where $0$ and $a_i - a$ appear $E$ times for each $i$ and $\mathcal{E}_{nE-iE}(Y_1, \dotsc,Y_{nE})$ is the $(nE-iE)$-th elementary symmetric polynomial in the variables $Y_1, \dotsc, Y_{nE}$. By definition, each contributing term in $\mathcal{E}_{nE-iE}(0, \dotsc, 0, a_2-a, \dotsc, a_2-a, \dotsc, a_n-a, \dotsc, a_n-a)$ is of the form $(a_{t_1} - a) \dotsc (a_{t_{nE-iE}} - a)$, where $a_{t_1}, \dotsc, a_{t_{nE-iE}} \in \{a,a_2, \dotsc,a_n\}$. We first assume that $i>j$. The fact that $v(a-a_i) < \g$ for all $i>j$ now implies that
\[ v((a_{t_1} - a) \dotsc (a_{t_{nE-iE}} - a)) + (iE-jE)\g > E v((a_{j+1}-a) \dotsc (a_n-a)) = E\a. \]
Recall that $vh=-E\a$ and $vf=-E\g$. It follows that $vf^{j-i}hc_{iE} > 0$ and as a consequence,
\begin{equation}\label{eqn fhc v = 0 when i>j}
	f^{j-i}hc_{iE} v = 0 \text{ whenever } i>j. 
\end{equation}
We now assume that $i=j$. The coefficient of $(f(X)(X-a)^E)^jv$ in $f^jhQ^Ev$ is given by $hc_{jE}v$. Each contributing term in the expression of $c_{jE}$ is of the form $(a_{t_1} - a) \dotsc (a_{t_{nE-jE}} - a)$, where $a_{t_1}, \dotsc, a_{t_{nE-jE}} \in \{a,a_2, \dotsc,a_n\}$. If $\{a_{t_1}, \dotsc, a_{t_{nE-jE}} \} = \{ a_{j+1}, \dotsc, a_n  \}$ with each term appearing $E$ times, then
\[ v((a_{t_1} - a) \dotsc (a_{t_{nE-jE}} - a)) = E v((a_{j+1}-a) \dotsc (a_n-a)) = E\a = -vh. \]
Otherwise, there exists some $t_k$ such that $v(a_{t_k}-a) \geq \g$ and as a consequence, 
\[ v((a_{t_1} - a) \dotsc (a_{t_{nE-jE}} - a)) > E v((a_{j+1}-a) \dotsc (a_n-a)). \]
It now follows from the triangle inequality that $v c_{jE} = -vh$. Consequently, 
\begin{equation}\label{eqn hc v not  0}
	hc_{jE}v\neq 0.
\end{equation}
It follows from (\ref{eqn hc v not  0}) and (\ref{eqn fhc v = 0 when i>j}) that 
\begin{equation}\label{eqn deg = j}
	 \deg (f^jhQ^Ev) = j.
\end{equation}
As a consequence of (\ref{eqn f^jhQ^Ev is a polynomial}) and (\ref{eqn K(a,X)v}) we then obtain that
\begin{equation}
	[K(a,X)v: K(a)v (f^jhQ^Ev)] = [K(a)v(f(X)(X-a)^Ev) : K(a)v (f^jhQ^Ev)] = j.
\end{equation}
In light of the multiplicative property of degrees of field extensions, it now follows from (\ref{eqn [K(X)v : L]}) that $[K(a,X)v:K(X)v]\l = j$. Recall that $\l = (vK(a,X):vK(X))$. It follows that
\begin{equation}
	[K(a,X)v:K(X)v](vK(a,X):vK(X)) = j.
\end{equation}
We have thus proved the theorem.
\end{proof}


\section{Independence of $j$} \label{Sec independence of j}

Take any $a\in\overline{K}$ and $\g$ in some ordered abelian group containing $v\overline{K}$. Recall that any polynomial $f(X)\in \overline{K}[X]$ has a unique expression of the form $f(X) = \sum_{i=0}^{n} c_i (X-a)^i$, where $c_i \in \overline{K}$. Consider the map $v_{a,\g}: \overline{K}[X] \to v\overline{K} + \ZZ\g$ by setting 
\[ v_{a,\g}f:= \min\{ v c_i + i\g \}. \]
Extend $v_{a,\g}$ canonically to $\overline{K}(X)$. Then $v_{a,\g}$ is a valuation transcendental extension of $v$ from $\overline{K}$ to $\overline{K}(X)$ [\ref{Kuh value groups residue fields rational fn fields}, Lemma 3.10]. By definition, $v_{a,\g} (X-a^\prime):= \min \{ \g, v(a-a^\prime) \}$ for any $a^\prime\in\overline{K}$. It follows that 
\[ v_{a,\g}(X-a) = \g = \max v_{a,\g}(X-\overline{K}). \]

\begin{Lemma}\label{Lemma j(a,g) = j(a,G)}
	Assume that $(K(X)|K,v)$ is a residue transcendental extension. Take a minimal pair of definition $(a,\g)$ for $v$ over $K$. Consider the ordered abelian group $v\overline{K}\dirsum\ZZ$ equipped with the lexicographic order. Embed $v\overline{K}$ into $(v\overline{K}\dirsum\ZZ)_{\text{lex}}$ by setting $\a\mapsto (\a,0)$ for all $\a\in v\overline{K}$. Define
	\[ \G:= (\g,-1). \]
	Take the extension $w:= v_{a,\G}$ of $v$ to $\overline{K}(X)$. Then $(a,\G)$ is a minimal pair of definition for $w$ over $K$. Further, 
	\[ j(a,K,\g) = j(a,K,\G). \]
\end{Lemma}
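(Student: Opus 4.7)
The plan is to carry out a short three-step verification, with essentially all the work reduced to a bookkeeping computation in the lexicographic order on $v\overline{K}\dirsum\ZZ$.

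First, I would record that $w=v_{a,\G}$ is well-defined and value transcendental. Any nonzero integer multiple $n\G=(n\g,-n)$ has nonzero second coordinate, while the embedded copy of $v\overline{K}$ sits entirely in the first coordinate. Hence $\G$ is not a torsion element modulo $v\overline{K}$, so $w$ is value transcendental over $\overline{K}$ by [\ref{Kuh value groups residue fields rational fn fields}, Theorem 3.11 and Lemma 3.10]; in particular $w$ restricts to $v$ on $K$.

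Second, I would check the defining conditions (MP1) and (MP2) for $(a,\G)$ over $K$ with respect to $w$. The crux is the lex-order equivalence
\[
(v(a-b),0) \geq (\g,-1) \quad \Longleftrightarrow \quad v(a-b) \geq \g \qquad (b\in\overline{K}),
\]
which follows directly from the lexicographic rule (strict inequality in the first coordinate, or equality combined with the trivially true $0\geq -1$). For (MP1), the definition of $v_{a,\G}$ gives $w(X-b)=\min\{\G,v(a-b)\}$; combined with the above equivalence this yields $w(X-b)=\G$ when $v(a-b)\geq\g$ (in particular for $b=a$) and $w(X-b)=v(a-b)<\G$ otherwise, so $\max w(X-\overline{K})=\G=w(X-a)$. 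For (MP2), an element $b\in\overline{K}$ with $w(a-b)\geq\G$ automatically satisfies $v(a-b)\geq\g$ by the same equivalence, so (MP2) for the pair $(a,\g)$ under $v$ forces $[K(b):K]\geq[K(a):K]$.

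Finally, the equality $j(a,K,\g)=j(a,K,\G)$ is immediate from the same equivalence: the conjugates $a_i$ of $a$ over $K$ are intrinsic to the extension $K(a)|K$ and do not depend on the valuation, so the condition $w(a-a_i)\geq\G$ cuts out exactly the same conjugates (with multiplicities) as the condition $v(a-a_i)\geq\g$.

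There is no substantive obstacle here; the whole lemma is a careful translation lemma. The only point requiring genuine care is the choice $\G=(\g,-1)$ rather than $(\g,+1)$ or $(\g,0)$: the entry $-1$ in the second coordinate is precisely what forces the weak inequality $v(a-b)\geq\g$ (equality included) to correspond to the weak inequality $w(a-b)\geq\G$, which is what makes both (MP1) and the counting of conjugates defining $j$ transport cleanly.
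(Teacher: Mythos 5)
Your proof is correct and follows essentially the same route as the paper: the single substantive step in both is the lexicographic equivalence $(v(a-b),0)\geq(\g,-1)\iff v(a-b)\geq\g$, from which (MP1), (MP2) and the matching count of conjugates all fall out directly. You spell out the MP1/MP2 verification and the value-transcendence of $w$ a bit more explicitly than the paper does, but there is no difference in method.
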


\begin{proof}
	It follows from our preceding discussions that 
	\[ w(X-a) = \G = \max w(X-\overline{K}). \]
	Take any $a^\prime\in\overline{K}$. By definition, $v(a-a^\prime) \geq \G$ if and only if $(v(a-a^\prime),0) \geq (\g, -1)$, which again holds if and only if $v(a-a^\prime) \geq \g$. We have thus obtained that
	\begin{equation}\label{eqn v(a-a prime) geq gamma}
		v(a-a^\prime) \geq \G \text{ if and only if } v(a-a^\prime)\geq \g.
	\end{equation}  
	Recall that $(a,\g)$ is a minimal pair of definition for $v$ over $K$. As a consequence of (\ref{eqn v(a-a prime) geq gamma}), we now obtain that
	\[ (a,\G) \text{ is a minimal pair of definition for $w$ over $K$}. \]
	It further follows from (\ref{eqn v(a-a prime) geq gamma}) that
	\[ j(a,K,\G) = j(a,K,\g). \]
\end{proof}

\begin{Lemma}\label{Lemma j(a) = j(a prime) when v is v.t.}
	Assume that $(K(X)|K,v)$ is value transcendental. Take minimal pairs of definition $(a,\g)$ and $(a^\prime, \g)$ for $v$ over $K$. Then $j(a) = j(a^\prime)$.
\end{Lemma}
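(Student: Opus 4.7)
The plan is to identify $j(a)$ with a quantity intrinsic to the extension $(\overline{K}(X)|K,v)$ together with the fixed value $\g$, thereby bypassing any dependence on the first coordinate of the minimal pair.

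The value transcendental hypothesis provides that $\g$ is not torsion modulo $v\overline{K}$. Running the value transcendental branch of the proof of Theorem \ref{Thm central} with $\overline{K}$ in place of $K$---any $a \in \overline{K}$ trivially being the first coordinate of a minimal pair of definition over $\overline{K}$---I will obtain the direct sum decomposition
\[ v\overline{K}(X) = v\overline{K} \dirsum \ZZ\g. \]
This will let me define the projection homomorphism
\[ \varphi \colon v\overline{K}(X) \to \ZZ, \qquad \alpha + n\g \longmapsto n. \]
Crucially, $\varphi$ is determined by the subgroup $v\overline{K} \subseteq v\overline{K}(X)$ and by $\g$, both of which are common to the data of the two minimal pairs $(a,\g)$ and $(a^\prime,\g)$, and hence $\varphi$ does not see the first coordinate.

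Next, I will invoke the identity $vK(X) = vK(a) \dirsum \ZZ j(a)\g$ established inside the value transcendental case of the proof of Theorem \ref{Thm central}. Since $vK(a) \subseteq v\overline{K} = \ker \varphi$, applying $\varphi$ to both sides will yield $\varphi(vK(X)) = j(a)\ZZ$. The identical argument applied to the minimal pair $(a^\prime,\g)$ will give $\varphi(vK(X)) = j(a^\prime)\ZZ$. Since $\varphi$ and $vK(X)$ are independent of the choice of minimal pair, the two expressions for $\varphi(vK(X))$ must agree, forcing $j(a) = j(a^\prime)$.

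I do not anticipate any serious obstacle here: the substantive work has already been done in Theorem \ref{Thm central}, and what remains is a formal projection computation. The only point warranting careful articulation is the intrinsic nature of the decomposition $v\overline{K}(X) = v\overline{K} \dirsum \ZZ\g$ and of the resulting projection $\varphi$, which becomes transparent once one observes that $v\overline{K}$ and $\g$ enter the setup without reference to either $a$ or $a^\prime$.
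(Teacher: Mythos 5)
Your proof is correct, but it takes a genuinely different route from the paper's. The paper compares the \emph{values} $vQ$ and $vQ'$ of the minimal polynomials of $a$ and $a'$: it invokes [\ref{Dutta min fields implicit const fields}, Lemma~3.10] to conclude $vQ = vQ'$, i.e., $j(a)\g + \a = j(a')\g + \a'$ with $\a,\a' \in v\overline{K}$, and then uses non-torsion of $\g$ to cancel. You instead work entirely at the level of value groups: you observe that $v\overline{K}(X) = v\overline{K} \dirsum \ZZ\g$ (which follows by the same reasoning as $vK(a,X) = vK(a)\dirsum\ZZ\g$ in the value transcendental branch of Theorem~\ref{Thm central}), build the projection $\varphi$ onto the $\ZZ\g$-factor, and recover $j(a)$ intrinsically as the generator of the subgroup $\varphi(vK(X)) = j(a)\ZZ$, where the identity $vK(X) = vK(a)\dirsum\ZZ j(a)\g$ is already supplied by the proof of Theorem~\ref{Thm central}. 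Both arguments ultimately exploit the same fact---that $\g$ is not torsion modulo $v\overline{K}$---but yours is self-contained relative to Theorem~\ref{Thm central} and avoids the extra citation, while the paper's is shorter and stays at the level of the polynomials. One small wording caveat: you remark that ``any $a\in\overline{K}$'' is the first coordinate of a minimal pair over $\overline{K}$, but (MP1) still requires $v(X-a) = \g = \max v(X-\overline{K})$; this holds for the specific $a$ coming from the given minimal pair over $K$, which is all you need.
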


\begin{proof}
	Take the minimal polynomial $Q(X)$ of $a$ over $K$ and the minimal polynomial $Q^\prime(X)$ of $a^\prime$ over $K$. Then $vQ = j(a)\g+\a$ and $vQ^\prime = j(a^\prime)\g+\a^\prime$, where $\a,\a^\prime\in v\overline{K}$. It follows from [\ref{Dutta min fields implicit const fields}, Lemma 3.10] that $vQ = vQ^\prime$. Consequently, $(j(a)-j(a^\prime))\g \in v\overline{K}$. The fact that $\g$ is not contained in the divisible group $v\overline{K}$ implies that $j(a) = j(a^\prime)$.
\end{proof}

\pars We can now give a \textbf{proof of Theorem \ref{Thm independence of j}}.

\begin{proof}
	When the extension $(K(X)|K,v)$ is value transcendental, then the assertion of Theorem \ref{Thm independence of j} is proved in Lemma \ref{Lemma j(a) = j(a prime) when v is v.t.}. We now assume that $(K(X)|K,v)$ is residue transcendental. Consider the ordered abelian group $(v\overline{K}\dirsum\ZZ)_{\text{lex}}$. Embed $v\overline{K}$ into $(v\overline{K}\dirsum\ZZ)_{\text{lex}}$ by setting $\a\mapsto (\a,0)$ for all $\a\in v\overline{K}$. Set $\G:= (\g,-1)$. The fact that $(a,\g)$ and $(a^\prime,\g)$ are minimal pair of definition for $v$ over $K$ implies that $v(a-a^\prime) \geq \g$ and hence $v(a-a^\prime) \geq \G$. It now follows from [\ref{AP sur une classe}, Proposition 3] that $v_{a,\G} = v_{a^\prime,\G}$. Set $w:= v_{a,\G} = v_{a^\prime,\G}$. In light of Lemma \ref{Lemma j(a,g) = j(a,G)}, we observe that $(a,\G)$ and $(a^\prime,\G)$ are minimal pairs of definition of $w$ over $K$. Further, $j(a,K,\g) = j(a,K,\G)$ and $j(a^\prime,K,\g) = j(a^\prime,K,\G)$. The fact that $\G\notin v\overline{K}$ implies that $w$ is a value transcendental extension of $v$ to $\overline{K}(X)$. It then follows from Lemma \ref{Lemma j(a) = j(a prime) when v is v.t.} that $j(a,K,\G) = j(a^\prime,K,\G)$. As a consequence, we conclude that
	\[ j(a,K,\g) = j(a^\prime,K,\g). \] 
\end{proof}


\section{Implicit constant fields}\label{Sec impl const fields}

\begin{Proposition}\label{Prop K rel alg closed in L}
	Assume that $L|K$ is an extension of fields such that $K$ is relatively algebraically closed in $L$. Take $a\in\overline{K}$. Then $K(a)$ and $L$ are linearly disjoint over $K$.
\end{Proposition}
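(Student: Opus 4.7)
The plan is to reduce linear disjointness to the statement that the minimal polynomial of $a$ over $K$ remains irreducible over $L$. Concretely, set $f(X) \in K[X]$ to be the minimal polynomial of $a$ over $K$ and put $n := \deg f = [K(a):K]$. Since $K(a)|K$ is finite, $K(a)$ and $L$ are linearly disjoint over $K$ if and only if
\[ [L(a):L] = [K(a):K], \]
i.e., if and only if $f$ is irreducible as an element of $L[X]$.

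So the bulk of the proof will be to show $f$ is irreducible over $L$. Fix an algebraic closure $\overline{L}$ of $L$ containing $a$, and inside $\overline{L}$ factor $f(X) = \prod_{i=1}^{n}(X - a_i)$ with $a_1 = a$; each $a_i$ lies in $\overline{K} \subseteq \overline{L}$, whether or not $a$ is separable (the $a_i$ may repeat in the inseparable case, but they are all roots of $f$ and hence in $\overline{K}$). Suppose for contradiction that $f = gh$ in $L[X]$ with $g, h$ monic of positive degree. In $\overline{L}[X]$, the factor $g$ must be the product of $(X - a_i)$ over some sub-multiset of the $a_i$'s, so each coefficient of $g$ is (up to sign) an elementary symmetric polynomial in certain $a_i$'s and therefore lies in $\overline{K}$.

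On the other hand, the coefficients of $g$ lie in $L$ by assumption. Hence they lie in $L \cap \overline{K}$, which equals $K$ because $K$ is relatively algebraically closed in $L$. This forces $g \in K[X]$, contradicting the irreducibility of $f$ over $K$. Consequently $f$ is irreducible over $L$, and the linear disjointness of $K(a)$ and $L$ over $K$ follows from the dimension criterion above.

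The only point requiring a bit of care is the inseparable case, where one must not implicitly assume distinct roots; but the key observation, that every monic factor of $f$ in $\overline{L}[X]$ has its roots among the conjugates $a_i \in \overline{K}$, works uniformly in all characteristics, so no separate argument is needed.
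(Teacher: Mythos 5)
Your proof is correct and follows essentially the same route as the paper: both reduce to showing the minimal polynomial of $a$ over $K$ stays irreducible over $L$ (equivalently, $[L(a):L]=[K(a):K]$), via the observation that the coefficients of any monic factor over $L$ are symmetric functions of roots in $\overline{K}$ and hence lie in $\overline{K}\cap L = K$. The only difference is cosmetic — you argue by contradiction on a hypothetical factorization $f=gh$, while the paper argues directly that the minimal polynomial of $a$ over $L$ already has coefficients in $K$.
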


\begin{proof}
	Take the minimal polynomial $Q(X)$ of $a$ over $L$. Write 
	\[ Q(X) = (X-a)(X-a_2) \dotsc (X-a_n) = \sum_{i=0}^{n} c_i X^i. \]
	Take the minimal polynomial $f(X)$ of $a$ over $K$. Then $Q$ divides $f$ over $L$ and hence each root of $Q$ is also a root of $f$. Thus each $a_i$ is a $K$-conjugate of $a$ and consequently $a_i \in \overline{K}$. Further, observe that each coefficient $c_j$ is a symmetric expression in the roots $a_i$ and hence $c_j \in \overline{K}$ for all $j$. Consequently, $c_j \in \overline{K}\sect L = K$, that is, $Q(X)\in K[X]$. It follows that 
	\[ [K(a):K] = [L(a):L], \]
	that is, $K(a)$ and $L$ are linearly disjoint over $K$. 
\end{proof}

\textit{For the rest of this section}, we fix an extension of $v$ to $\overline{K(X)}$. Take a minimal pair of definition $(a,\g)$ for $v$ over $K$. We have observed in [\ref{Dutta min fields implicit const fields}, Theorem 1.1] that $K^h \subseteq IC(K(X)|K,v) \subseteq K(a)^h$. As a consequence, 
\[ IC(K(X)|K,v)(a) = K(a)^h. \]
By definition, $IC(K(X)|K,v)$ is relatively algebraically closed in $K(X)^h$. It is now a direct consequence of Proposition \ref{Prop K rel alg closed in L} that $K(a)^h$ and $K(X)^h$ are linearly disjoint over $IC(K(X)|K,v)$. Observe that $K(a)^h.K(X)^h = K(X)^h(a) = K(a,X)^h$. Thus,
\[ [K(a)^h:IC(K(X)|K,v)] = [K(a,X)^h: K(X)^h]. \]
From the Lemma of Ostrowski, we have that 
\[ [K(a,X)^h: K(X)^h] = (vK(a,X)^h:vK(X)^h)[K(a,X)^hv: K(X)^hv] d(K(a,X)^h|K(X)^h, v). \]
Recall that henselization is an immediate extension. The following result now follows immediately from Theorem \ref{Thm central}: 
\begin{equation}\label{eqn j(a) divides deg K(a)h over IC}
	[K(a)^h:IC(K(X)|K,v)] = j(a)d(K(a,X)^h|K(X)^h, v).
\end{equation}
	Assume that $(K,v)$ is a defectless valued field. The fact that $(K(X)|K,v)$ is a valuation transcendental extension implies that we can apply [\ref{Kuh gen stab thm}, Theorem 1.1] to this extension and obtain that $(K(X),v)$ is also a defectless field. By definition, $(K(X)^h, v)$ is a defectless field. We have thus arrived at the following result:

\begin{Theorem}\label{Thm j(a) divides deg K(a)h over IC}
	Take a minimal pair of definition $(a,\g)$ for $v$ over $K$. Fix an extension of $v$ to $\overline{K(X)}$. Then
	\[ [K(a)^h:IC(K(X)|K,v)] = j(a)d(K(a,X)^h|K(X)^h, v). \]
	In particular, 
	\[ [K(a)^h:IC(K(X)|K,v)] = j(a) \text{ whenever } (K,v) \text{ is defectless}. \]
\end{Theorem}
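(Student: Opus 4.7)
The plan is to reduce the index $[K(a)^h:IC(K(X)|K,v)]$ to the index $[K(a,X)^h:K(X)^h]$ via linear disjointness, then apply Ostrowski together with Theorem \ref{Thm central}, and finally invoke Kuhlmann's generalized stability theorem to handle the defectless case.

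First, recall from \cite{Dutta min fields implicit const fields} that $K^h\subseteq IC(K(X)|K,v)\subseteq K(a)^h$, which already gives $IC(K(X)|K,v)(a)=K(a)^h$. Since $IC(K(X)|K,v)$ is by definition relatively algebraically closed in $K(X)^h$, Proposition \ref{Prop K rel alg closed in L} (applied with $IC(K(X)|K,v)$ in the role of $K$ and $K(X)^h$ in the role of $L$) yields that $K(a)^h$ and $K(X)^h$ are linearly disjoint over $IC(K(X)|K,v)$. Their compositum is $K(X)^h(a)=K(a,X)^h$, so
\[ [K(a)^h:IC(K(X)|K,v)] = [K(a,X)^h:K(X)^h]. \]

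Next, since henselian valued fields admit a unique extension of the valuation, the Lemma of Ostrowski applies to $K(a,X)^h | K(X)^h$ and gives
\[ [K(a,X)^h:K(X)^h] = (vK(a,X)^h : vK(X)^h)\,[K(a,X)^hv:K(X)^hv]\, d(K(a,X)^h|K(X)^h,v). \]
Using that henselization is an immediate extension, the value group and residue field indices coincide with $(vK(a,X):vK(X))$ and $[K(a,X)v:K(X)v]$ respectively, whose product equals $j(a)$ by Theorem \ref{Thm central}. Combining the two displays yields the first assertion.

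For the second assertion, assume $(K,v)$ is defectless, so by definition its henselization is a defectless field. Since $(K(X)|K,v)$ is valuation transcendental by standing hypothesis, the generalized stability theorem \cite{Kuh gen stab thm} ensures that $(K(X),v)$ is again defectless, and hence so is its henselization $(K(X)^h,v)$. Therefore the defect $d(K(a,X)^h|K(X)^h,v)$ of the finite extension $K(a,X)^h|K(X)^h$ is trivial, and the general formula collapses to $[K(a)^h:IC(K(X)|K,v)] = j(a)$. The only nontrivial step in the argument is the appeal to the generalized stability theorem to propagate defectlessness from $K$ to $K(X)^h$; everything else is an application of results already established in the paper.
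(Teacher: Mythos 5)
Your proof is correct and follows the paper's own argument essentially step for step: the same reduction $[K(a)^h:IC(K(X)|K,v)]=[K(a,X)^h:K(X)^h]$ via $IC(K(X)|K,v)(a)=K(a)^h$ and Proposition \ref{Prop K rel alg closed in L}, the same application of Ostrowski's Lemma with immediacy of henselization and Theorem \ref{Thm central}, and the same appeal to the generalized stability theorem for the defectless case.
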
 

\begin{Corollary}
	Take a minimal pair of definition $(a,\g)$ for $v$ over $K$. Fix an extension of $v$ to $\overline{K(X)}$. 
	\sn (i) A necessary condition for obtaining $IC(K(X)|K,v) = K(a)^h$ is that $j(a)=1$. If $(K,v)$ is defectless, then the condition is also sufficient.
	\n (ii) Assume that $j(a) = [K(a):K]$. Then $IC(K(X)|K,v) = K^h$. 
\end{Corollary}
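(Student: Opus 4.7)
The plan is to deduce both parts directly from Theorem \ref{Thm j(a) divides deg K(a)h over IC}, which gives the key degree formula
\[ [K(a)^h:IC(K(X)|K,v)] = j(a)\, d(K(a,X)^h|K(X)^h, v), \]
combined with the basic degree inequality $[K(a)^h:K^h] \leq [K(a):K]$ that comes from the fact that $K(a)^h = K(a).K^h$.

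For part (i), I would argue as follows. If $IC(K(X)|K,v) = K(a)^h$, then the left-hand side of the formula equals $1$. Since both factors on the right are positive integers, we must have $j(a) = 1$ (and incidentally also $d(K(a,X)^h|K(X)^h,v) = 1$); this gives the necessary direction. For sufficiency under the defectless assumption, I would invoke the second assertion of Theorem \ref{Thm j(a) divides deg K(a)h over IC}, namely that when $(K,v)$ is defectless we have $[K(a)^h:IC(K(X)|K,v)] = j(a)$. So $j(a) = 1$ forces $IC(K(X)|K,v) = K(a)^h$, using also the containment $IC(K(X)|K,v) \subseteq K(a)^h$ recalled earlier in Section \ref{Sec impl const fields}.

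For part (ii), the idea is to sandwich the index $[K(a)^h : IC(K(X)|K,v)]$ between $[K(a):K]$ and itself. On the one hand, Theorem \ref{Thm j(a) divides deg K(a)h over IC} gives
\[ [K(a)^h:IC(K(X)|K,v)] = j(a)\, d(K(a,X)^h|K(X)^h,v) \geq j(a) = [K(a):K]. \]
On the other hand, since $K^h \subseteq IC(K(X)|K,v) \subseteq K(a)^h$ and $K(a)^h = K(a).K^h$, we have
\[ [K(a)^h:IC(K(X)|K,v)] \leq [K(a)^h:K^h] \leq [K(a):K]. \]
All the terms must therefore coincide. In particular $[K(a)^h:IC(K(X)|K,v)] = [K(a)^h:K^h]$, which together with $K^h \subseteq IC(K(X)|K,v) \subseteq K(a)^h$ forces $IC(K(X)|K,v) = K^h$.

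I do not anticipate a substantial obstacle; the main content lies in Theorem \ref{Thm j(a) divides deg K(a)h over IC} and the elementary compositum inequality. The only minor care needed is to confirm that the inequality $[K(a)^h:K^h] \leq [K(a):K]$ is available without any separability hypothesis on $a$, which is standard since $K(a)^h$ is obtained by adjoining $a$ to $K^h$, so its degree over $K^h$ is at most the degree of the minimal polynomial of $a$ over $K$.
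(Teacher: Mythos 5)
Your proposal is correct and follows essentially the same route as the paper: part (i) is read off directly from the degree formula in Theorem \ref{Thm j(a) divides deg K(a)h over IC}, and part (ii) sandwiches $[K(a)^h:IC(K(X)|K,v)]$ between $j(a)$ and $[K(a):K]$ using $K^h \subseteq IC(K(X)|K,v) \subseteq K(a)^h$ and $K(a)^h = K^h(a)$. No gaps; the only difference is that you spell out the part (i) argument that the paper calls ``immediate''.
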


\begin{proof}
	The first assertion is an immediate consequence of Theorem \ref{Thm j(a) divides deg K(a)h over IC}. It is thus enough to prove $(ii)$. We assume that $j(a) = [K(a):K]$. It then follows from Theorem \ref{Thm j(a) divides deg K(a)h over IC} that
	\[ [K(a)^h:IC(K(X)|K,v)] \geq j(a) = [K(a):K]. \]
	Recall that $K^h(a) = K(a)^h$. Consequently, 
	\[ [K(a):K] \geq [K(a)^h:K^h] \geq [K(a)^h:IC(K(X)|K,v)]. \]
	It now follows that $[K(a)^h:IC(K(X)|K,v)] = [K(a)^h:K^h]$. As a consequence, 
	\[ IC(K(X)|K,v) = K^h. \]
\end{proof}

An immediate corollary is the following:

\begin{Corollary}([\ref{Dutta min fields implicit const fields}, Proposition 7.1])
	Take a minimal pair of definition $(a,\g)$ for $v$ over $K$. Assume that $a$ is purely inseparable over $K$. Fix an extension of $v$ to $\overline{K(X)}$. Then $IC(K(X)|K,v) = K^h$.
\end{Corollary}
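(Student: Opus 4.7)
The plan is to reduce this directly to part (ii) of the preceding corollary, which states that whenever $j(a) = [K(a):K]$ one has $IC(K(X)|K,v) = K^h$. Thus it suffices to establish that the purely inseparable hypothesis forces $j(a) = [K(a):K]$.

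For this step I would argue as follows. Since $a$ is purely inseparable over $K$, we have $\ch K = p > 0$ and the minimal polynomial of $a$ over $K$ takes the form
\[ Q(X) = (X-a)^{p^k}, \]
where $p^k = [K(a):K]$. Listing the conjugates of $a$ over $K$ with multiplicity, we therefore obtain $p^k$ copies of $a$ itself. For every such conjugate $a_i = a$, we trivially have $v(a - a_i) = \infty \geq \g$. Recalling the definition of $j(a) = j(a,K,\g)$ introduced via Theorem \ref{Thm central}, this counts exactly $p^k$ conjugates satisfying the defining inequality, so $j(a) = p^k = [K(a):K]$.

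Applying part (ii) of the preceding corollary then yields $IC(K(X)|K,v) = K^h$, as required.

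I do not foresee a genuine obstacle: the argument only combines the shape of the minimal polynomial in the purely inseparable setting with an already established consequence of Theorem \ref{Thm j(a) divides deg K(a)h over IC}. The only point requiring attention is that $j(a)$ counts conjugates \emph{with multiplicity}, which is precisely what allows the degenerate configuration $Q(X) = (X-a)^{p^k}$ to contribute the full degree rather than a single conjugate.
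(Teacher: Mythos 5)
Your proof is correct and matches the paper's intent exactly: the paper introduces this as an ``immediate corollary'' of the preceding one, and your argument simply makes explicit the one-line reduction that in the purely inseparable case the minimal polynomial $(X-a)^{p^k}$ makes $j(a) = p^k = [K(a):K]$, so part (ii) applies. (The only negligible quibble: your parenthetical $\ch K = p > 0$ silently assumes $a \notin K$; the degenerate case $a \in K$ is handled trivially by the same count $j(a)=1=[K(a):K]$, so nothing is lost.)
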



\section{Proof of Theorem \ref{Thm IC bounds}} \label{Sec proof of thm 1.2}

\begin{proof}
	When the  extension $(K(X)|K,v)$ is value transcendental, the assertion is proved in [\ref{Dutta min fields implicit const fields}, Theorem 1.1]. So we assume that $(K(X)|K,v)$ is residue transcendental.
	
	\pars We fix an extension of $v$ to $\overline{K(X)}$ and denote it again by $v$. Observe that $K(a)\sect K^r$ is a finite separable extension, hence simple. Hence we can choose $b\in \overline{K}$ such that $K(a)\sect K^r = K(b)$. The inclusion $IC(K(X)|K,v) \subseteq K(a)^h$ follows from [\ref{Dutta min fields implicit const fields}, Lemma 5.1]. It is left to show that $K(b)^h \subseteq IC(K(X)|K,v)$. Recall that $IC(K(X)|K,v):= \overline{K}\sect K(X)^{h(v)}$. It is thus sufficient to show that $b\in K(X)^{h(v)}$, that is, $K(b,X)^{h(v)} = K(X)^{h(v)}$. 
	
\pars	Applying the Lemma of Ostrowski to the extension $(K(a,X)^{h(v)}|K(X)^{h(v)}, v)$, we obtain the following relation in light of Theorem \ref{Thm central}:
	\[ [K(a,X)^{h(v)} : K(X)^{h(v)}] = j(a,K,\g) d(K(a,X)^{h(v)}|K(X)^{h(v)}, v). \]
Take $a^\prime\in\overline{K}$ such that $v(a-a^\prime) \geq\g$. By definition, $[K(a):K] \leq [K(a^\prime):K]$. The fact that $b\in K(a)$ then implies that 
\[ [K(a,b):K] = [K(a):K] \leq [K(a^\prime):K] \leq [K(a^\prime,b):K]. \]
Consequently, 
\[ [K(a,b):K(b)] \leq [K(a^\prime,b):K(b)]. \]
It follows that $(a,\g)$ is also a minimal pair of definition for $v$ over $K(b)$. It now follows from Theorem \ref{Thm central} and the Lemma of Ostrowski that
\[ [K(a,X)^{h(v)} : K(b,X)^{h(v)}] = j(a,K(b),\g) d(K(a,X)^{h(v)}|K(b,X)^{h(v)}, v). \]
As a consequence, 
\[ [K(b,X)^{h(v)} : K(X)^{h(v)}] = \frac{j(a,K,\g)}{j(a,K(b),\g)} d(K(b,X)^{h(v)}|K(X)^{h(v)}, v). \]
It follows from [\ref{Dutta Kuh abhyankars lemma}, Theorem 3] that the condition that $b\in K^r$ implies that $K(b,X) \subseteq K(X)^{r(v)}$. Consequently, $(K(b,X)^{h(v)}|K(X)^{h(v)}, v)$ is a defectless extension. We have thus obtained that
\begin{equation}\label{eqn K(b,X)h(v) over K(X)h(v)}
	[K(b,X)^{h(v)} : K(X)^{h(v)}] = \frac{j(a,K,\g)}{j(a,K(b),\g)}.
\end{equation}

\parm We now consider the valuation $w$ as constructed in the statement of Lemma \ref{Lemma j(a,g) = j(a,G)}. Fix an extension of $w$ to $\overline{K(X)}$ and denote it again by $w$. Similar arguments as above yield that
\[ [K(b,X)^{h(w)} : K(X)^{h(w)}] = \frac{j(a,K,\G)}{j(a,K(b),\G)}. \]
Observe that $\G\notin v\overline{K}$ and hence $(K(X)|K,w)$ is a value transcendental extension. It then follows from [\ref{Dutta min fields implicit const fields}, Theorem 1.1] that $K(b)^h \subseteq IC(K(X)|K,w)$. As a consequence, $b\in K(X)^{h(w)}$ and hence $K(b,X)^{h(w)} = K(X)^{h(w)}$. In light of the preceding discussions, we conclude that
\[ j(a,K,\G) = j(a,K(b),\G). \]
It follows from Lemma \ref{Lemma j(a,g) = j(a,G)} that $j(a,K,\g) = j(a,K,\G)$. Observe that $(a,\g)$ is also a minimal pair of definition for $v$ over $K(b)$. Applying Lemma \ref{Lemma j(a,g) = j(a,G)} to the extension $(K(b,X)|K(b),v)$, we obtain that $j(a,K(b),\G) = j(a,K(b),\g)$. As a consequence, 
\[ j(a,K,\g) = j(a,K,\G) = j(a,K(b),\G) = j(a,K(b),\g). \]
It now follows from (\ref{eqn K(b,X)h(v) over K(X)h(v)}) that $K(b,X)^{h(v)} = K(X)^{h(v)}$. Consequently, $b\in K(X)^{h(v)}$ and hence $K(b)^h \subseteq IC(K(X)|K,v)$. We have thus proved the theorem. 
\end{proof}

\begin{Corollary}
	Take a minimal pair of definition $(a,\g)$ for $v$ over $K$ and fix an extension of $v$ to $\overline{K(X)}$. Assume that $j(a) = [K(a):K(a)\sect K^r]$. Then $IC(K(X)|K,v) = (K(a)\sect K^r)^h$.
\end{Corollary}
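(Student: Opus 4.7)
The plan is to combine Theorem \ref{Thm IC bounds}, which furnishes the chain $(K(a) \sect K^r)^h \subseteq IC(K(X)|K,v) \subseteq K(a)^h$, with the degree formula
\[ [K(a)^h : IC(K(X)|K,v)] = j(a) \cdot d(K(a,X)^h|K(X)^h, v) \]
provided by Theorem \ref{Thm j(a) divides deg K(a)h over IC}. Write $K(b) := K(a) \sect K^r$, which is a finite separable (hence simple) extension of $K$. The goal is then to show that the leftmost inclusion above is in fact an equality by squeezing everything between two appearances of $j(a)$.

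First I would establish the identity $K(a)^h = K(b)^h(a)$. This follows from the general formula $L^h = L \cdot K^h$ recalled in Section \ref{Sec prelim}, applied to both $L = K(a)$ and $L = K(b)$: indeed $K(a)^h = K(a) \cdot K^h = K(b)(a) \cdot K^h = K(b)^h(a)$. Consequently, the minimal polynomial of $a$ over $K(b)^h$ divides its minimal polynomial over $K(b)$, which yields the upper bound
\[ [K(a)^h : K(b)^h] \leq [K(a) : K(b)] = j(a), \]
where the final equality is precisely the hypothesis of the corollary.

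On the other hand, the degree formula from Theorem \ref{Thm j(a) divides deg K(a)h over IC} shows that $[K(a)^h : IC(K(X)|K,v)] \geq j(a)$. Combining this with the inclusion chain $K(b)^h \subseteq IC(K(X)|K,v) \subseteq K(a)^h$ and the multiplicativity of field degrees, we arrive at
\[ j(a) \leq [K(a)^h : IC(K(X)|K,v)] \leq [K(a)^h : K(b)^h] \leq j(a). \]
All the inequalities must therefore be equalities; in particular $[IC(K(X)|K,v) : K(b)^h] = 1$, and so $IC(K(X)|K,v) = K(b)^h = (K(a) \sect K^r)^h$, as desired. There is no serious obstacle here --- the argument is a short degree squeeze once the two major theorems are in hand. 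The only point deserving care is the identity $K(a)^h = K(b)^h(a)$, which relies on the henselizations of $K(a)$ and $K(b)$ being computed with respect to restrictions of a single, fixed extension of $v$ to $\overline{K(X)}$; this compatibility is built into the standing assumption of the section. As a byproduct we also conclude that $d(K(a,X)^h|K(X)^h, v) = 1$.
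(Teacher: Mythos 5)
Your argument is correct and is essentially the same degree-squeeze that the paper uses: both proofs combine the chain $(K(a)\sect K^r)^h \subseteq IC(K(X)|K,v) \subseteq K(a)^h$ from Theorem~\ref{Thm IC bounds} with the lower bound $j(a) \leq [K(a)^h:IC(K(X)|K,v)]$ from Theorem~\ref{Thm j(a) divides deg K(a)h over IC} and the upper bound $[K(a)^h:K(b)^h] \leq [K(a):K(b)] = j(a)$, then squeeze. The only difference is that you explicitly spell out the justification $K(a)^h = K(b)^h(a)$ for the step $[K(a)^h:K(b)^h] \leq [K(a):K(b)]$, which the paper leaves implicit.
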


\begin{proof}
	Take $b\in\overline{K}$ such that $K(b) = K(a)\sect K^r$. Recall that $j(a)$ divides $[K(a)^h:IC(K(X)|K,v)]$. In light of Theorem \ref{Thm IC bounds} we have the following chain of relations:
	\[ j(a) \leq [K(a)^h:IC(K(X)|K,v)] \leq [K(a)^h:K(b)^h] \leq [K(a):K(b)]. \]
	The assumption that $j(a)= [K(a):K(b)]$ now implies that each of the inequalities in the above expression is an equality. As a consequence, we obtain that 
	\[ IC(K(X)|K,v) = K(b)^h. \]
\end{proof}


\end{document}